\theoremstyle{plain}
\newtheorem{theorem}{Theorem}[section]
\newtheorem{lemma}[theorem]{Lemma}
\newtheorem{corollary}[theorem]{Corollary}
\newtheorem{proposition}[theorem]{Proposition}
\newcommand{\bnum}{\begin{enumerate}}
\newcommand{\enum}{\end{enumerate}}
\numberwithin{equation}{section}
\DeclareMathOperator{\E}{E}
\begin{document}

\title{\textbf{On generalized commuting probability of finite rings}}
\author{Parama Dutta and Rajat Kanti Nath\footnote{Corresponding author}}
\date{}
\maketitle
\begin{center}\small{\it
Department of Mathematical Sciences, Tezpur University,\\ Napaam-784028, Sonitpur, Assam, India.\\



Emails:\, parama@gonitsora.com and rajatkantinath@yahoo.com}
\end{center}

\medskip

\begin{abstract}
Let $R$ be a finite ring and $r \in R$. The aim of this paper is to study  the probability that the commutator of a randomly chosen pair of elements of $R$ equals $r$.
\end{abstract}

\medskip

\noindent {\small{\textit{Key words:}  finite ring, commuting probability, ${\mathbb{Z}}$-isoclinism of rings.}}

\noindent {\small{\textit{2010 Mathematics Subject Classification:}
16U70, 16U80.}}

\medskip

\section{Introduction}
Let $F$ be an algebraic system having finite number of elements which is closed under a multiplication operation. The commuting probability of $F$, denoted by $\Pr(F)$, is the probability that a randomly chosen pair of elements of $F$ commute. That is,
\[
\Pr(F) = \frac{|\{(x, y) \in F \times F : xy = yx\}|}{|F \times F|}.
\]
The commuting probability of a finite algebraic system is originated from the works of Erd$\ddot{\rm o}$s and Tur$\acute {\rm a}$n in \cite{pEpT68}. In the last few decades, many mathematicians have studied $\Pr(F)$, including various generalizations,  considering $F$ to be a finite group (see \cite{Dnp13} and the references therein). In 1976,  MacHale \cite{dmachale} initiated the study of $\Pr(F)$ considering $F$ to be a finite ring. Somehow people did not pay much attention towards the study of commuting probability of finite rings and there generalizations over the years. At this moment, we have only the following papers   \cite{BM,BMS,jutireka,dmachale} on commuting probability of finite rings.

 Let $R$ be a finite ring and $r$  an element of $R$. In this paper, we consider the following ratio
\begin{equation}\label{mainformula}
{\Pr}_r(R) := \frac{|\lbrace(x, y)\in R\times R : [x, y] = r\rbrace|} {|R\times R|}
\end{equation}
where $[x, y] := xy - yx$ is the  commutator of $x$ and $y$. Notice that ${\Pr}_r(R)$ is  the probability that the commutator of a randomly chosen pair of elements of $R$ equals $r$. Also, ${\Pr}_r(R) = {\Pr}(R)$ if $r = 0$, the additive identity of $R$.

The motivation of this paper lies in \cite{nY15, PS08}, where analogous generalizations of commuting probability of finite groups are studied. Although there is no analogous concept of conjugacy class in ring theory, we obtain a computing formula for ${\Pr}_r(R)$. We also obtain some bounds and an invariance property of ${\Pr}_r(R)$ under isoclinism.





\section{Computing formulas and bounds}
Let $[R, R]$ and $[x, R]$ for $x \in R$  denote the additive subgroups of $(R, +)$ generated by the sets $\lbrace [x, y] : (x, y) \in R \times R \rbrace$ and $\lbrace [x, y] : y\in R\rbrace$ respectively. It can be seen that any element of $[x, R]$ is of the form $[x, y]$ for some $y \in R$. For any element $x \in R$, the centralizer of $x$ in $R$ is a subring given by $C_R(x) : = \{y \in R : xy = yx\}$. Also, the center of $R$ is given by $Z(R) := \underset{x \in R}{\cap}C_R(x)$. The following lemma gives a relation between $[x, R]$ and  $C_R(x)$.



\begin{lemma}\label{lemma1}
Let $R$ be a finite ring. If  $x \in R$ then
$
|[x, R]|=\frac {|R|}{|C_R(x)|}.
$
\end{lemma}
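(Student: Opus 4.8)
The plan is to set up the map $\varphi\colon R \to [x,R]$ defined by $\varphi(y) = [x,y]$ and show it is a surjective homomorphism of additive groups whose kernel is exactly $C_R(x)$; the result then follows from the first isomorphism theorem applied to $(R,+)$.

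First I would check that $\varphi$ is a homomorphism of the additive group $(R,+)$: for $y_1, y_2 \in R$ we have $[x, y_1 + y_2] = x(y_1+y_2) - (y_1+y_2)x = (xy_1 - y_1 x) + (xy_2 - y_2 x) = [x,y_1] + [x,y_2]$, using only distributivity in $R$. Next, surjectivity: by definition $[x,R]$ is the additive subgroup generated by $\{[x,y] : y \in R\}$, but the remark already recorded in the paragraph before the lemma states that every element of $[x,R]$ is already of the form $[x,y]$ for some $y \in R$ — which is immediate from the fact that $\varphi$ is an additive homomorphism, so its image is a subgroup containing all the generators and hence equals $[x,R]$. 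Thus $\varphi(R) = [x,R]$.

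The computation of the kernel is the only place where one must be slightly careful, and it is essentially a definitional unravelling rather than a genuine obstacle: $\varphi(y) = 0$ means $xy - yx = 0$, i.e. $xy = yx$, i.e. $y \in C_R(x)$. Hence $\ker\varphi = C_R(x)$.

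Finally, applying the first isomorphism theorem for abelian groups gives $R/C_R(x) \cong [x,R]$ as additive groups, so $|[x,R]| = |R|/|C_R(x)|$, as claimed. I do not anticipate a real obstacle here; the one thing to state explicitly is that $C_R(x)$ is an additive subgroup (indeed a subring) of $R$, which the paper has already noted, so the quotient makes sense.
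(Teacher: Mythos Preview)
Your proposal is correct and is essentially the same argument as the paper's: the paper exhibits the isomorphism $[x,R]\to R/C_R(x)$ via $[x,y]\mapsto y+C_R(x)$, while you build the same isomorphism in the other direction using the first isomorphism theorem applied to $\varphi(y)=[x,y]$. Your version is slightly more explicit about well-definedness (handled automatically by the kernel computation), but the underlying idea is identical.
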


\begin{proof}
It is easy to see that the map  $[x, y] \mapsto y + C_R(x)$ defines an isomorphism from $[x, R]$ to $\frac {R}{C_R(x)}$. Hence, the lemma follows.
\end{proof}

For any two given elements $x$ and $r$ of $R$, we write $T_{x, r} = \{y\in R : [x, y] = r\}$. The following lemma gives a relation between $T_{x, r}$ and  $C_R(x)$.
\begin{lemma}\label{lemma2}
Let $R$ be a finite ring and  $x,r \in R$. Then we have the followings:
\begin{enumerate}
\item  $T_{x, r} \ne \phi$ if and only if $r \in [x, R]$.

\item If $T_{x, r}\neq \phi$ then $T_{x, r} = t + C_R(x)$ for some $t\in T_{x, r}$ and hence $|T_{x, r}| = |C_R(x)|$.
\end{enumerate}
\end{lemma}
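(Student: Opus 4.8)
The plan is to treat the two assertions separately, each reducing to an elementary property of the map $y \mapsto [x, y]$ for fixed $x$.

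For part (a), one direction is immediate: if $T_{x, r} \neq \phi$, choose $y \in R$ with $[x, y] = r$; then $r$ belongs to the set $\{[x, y] : y \in R\}$ that generates $[x, R]$, so $r \in [x, R]$. For the converse, suppose $r \in [x, R]$. Here I would invoke the observation recorded just before Lemma~\ref{lemma1}, namely that every element of $[x, R]$ is in fact of the form $[x, y]$ for some $y \in R$. Applying this to $r$ yields $y \in R$ with $[x, y] = r$, i.e. $y \in T_{x, r}$, so $T_{x, r} \neq \phi$.

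For part (b), assume $T_{x, r} \neq \phi$ and fix some $t \in T_{x, r}$, so that $[x, t] = r$. The key point is that $y \mapsto [x, y]$ is additive, since $[x, y_1 - y_2] = x(y_1 - y_2) - (y_1 - y_2)x = [x, y_1] - [x, y_2]$. Consequently, for any $y \in R$ we have the chain of equivalences $y \in T_{x, r} \iff [x, y] = [x, t] \iff [x, y - t] = 0 \iff y - t \in C_R(x) \iff y \in t + C_R(x)$, where we use that $C_R(x) = \{z \in R : [x, z] = 0\}$. This shows $T_{x, r} = t + C_R(x)$, a coset of the additive subgroup $C_R(x)$ of $(R, +)$, and hence $|T_{x, r}| = |C_R(x)|$.

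There is no genuine obstacle in this argument; the only things to be careful about are recording that $[x, \cdot]$ respects subtraction (so that $T_{x,r}$, once nonempty, is stable under the coset shift) and identifying $C_R(x)$ with the set of $z$ killed by $[x, \cdot]$, which is what converts $T_{x,r}$ into an additive coset of fixed size.
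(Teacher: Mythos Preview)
Your argument is correct and follows essentially the same route as the paper: part (a) uses that every element of $[x,R]$ is an actual commutator $[x,y]$, and part (b) identifies $T_{x,r}$ with the coset $t + C_R(x)$ via the additivity of $[x,\cdot]$. The paper phrases part (b) as two inclusions rather than a chain of equivalences, but the content is identical.
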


\begin{proof}
Part (a) follows from the fact that $y \in T_{x, r}$ if and only if $r \in [x, R]$.
For part (b), let $t \in T_{x, r}$ and  $p \in t + C_R(x)$. Then $[x, p] = r$ and so $p \in T_{x, r}$. Therefore,  $t + C_R(x) \subseteq T_{x, r}$. Again, if $y \in T_{x, r}$ then  $(y - t) \in C_R(x)$ and so $y \in t + C_R(x)$. Therefore, $t + C_R(x)\subseteq T_{x, r}$. Hence, the result follows.
\end{proof}
\noindent Now we obtain the following computing formula for ${\Pr}_r(R)$ analogous to Lemma 3.1 of \cite{nY15}.

\begin{theorem}\label{com-thm}
Let $R$ be a finite non-commutative ring and $r \in R$. Then
\[
{\Pr}_r(R) = \frac {1}{{|R|}^2}\underset{r\in [x, R]}{\underset{x\in R}{\sum}}|C_R(x)| = \frac {1}{|R|}\underset{r\in [x, R]}{\underset{x\in R}{\sum}}\frac{1}{|[x, R]|}.
\]
\end{theorem}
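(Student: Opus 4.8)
The plan is to partition the counting set $\{(x,y) \in R \times R : [x,y] = r\}$ by its first coordinate and then feed the result into the two preceding lemmas. First I would note that for each fixed $x \in R$ the fibre of this set over $x$ is exactly $\{x\} \times T_{x,r}$, so that
\[
|\{(x,y) \in R \times R : [x,y] = r\}| = \underset{x \in R}{\sum} |T_{x,r}|,
\]
and dividing by $|R \times R| = |R|^2$ gives ${\Pr}_r(R) = \frac{1}{|R|^2}\sum_{x \in R}|T_{x,r}|$.

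Next I would evaluate the terms $|T_{x,r}|$ using Lemma \ref{lemma2}. By part (a), $T_{x,r} = \phi$ whenever $r \notin [x,R]$, so those terms contribute $0$ and the sum reduces to one over the $x$ with $r \in [x,R]$; by part (b), each such term equals $|C_R(x)|$. This yields the first claimed identity
\[
{\Pr}_r(R) = \frac{1}{|R|^2}\underset{r\in [x, R]}{\underset{x\in R}{\sum}}|C_R(x)|.
\]
For the second identity I would merely substitute $|C_R(x)| = |R|/|[x,R]|$ from Lemma \ref{lemma1} into each surviving term and pull one factor of $|R|$ out of the sum, turning $\frac{1}{|R|^2}\sum |C_R(x)|$ into $\frac{1}{|R|}\sum \frac{1}{|[x,R]|}$.

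There is no genuine obstacle here; the proof is a short bookkeeping argument once Lemmas \ref{lemma1} and \ref{lemma2} are available. The only points deserving a moment's attention are keeping the index set of the sum honest — i.e. explicitly discarding the $x$ with $r \notin [x,R]$ via Lemma \ref{lemma2}(a) rather than silently — and observing that the non-commutativity hypothesis on $R$ plays no role in the derivation itself (it only ensures the setting is nondegenerate and matches Lemma 3.1 of \cite{nY15}).
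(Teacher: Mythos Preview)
Your proposal is correct and follows essentially the same route as the paper: decompose the counting set as $\bigcup_{x\in R}(\{x\}\times T_{x,r})$, invoke Lemma~\ref{lemma2} to reduce $\sum_{x\in R}|T_{x,r}|$ to $\sum_{x\in R,\, r\in[x,R]}|C_R(x)|$, and then apply Lemma~\ref{lemma1} for the second equality. If anything, your version is a bit more explicit than the paper's in separating the roles of parts (a) and (b) of Lemma~\ref{lemma2}.
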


\begin{proof}
Clearly $\{(x, y) \in R\times R : [x, y] = r\} = \underset{x\in R}{\cup}(\{x\}\times T_{x, r})$. Therefore, by \eqref{mainformula} and Lemma \ref{lemma2}, we have
\begin{equation} \label{comfor1}
{|R|}^2{\Pr}_r(R) = \underset{x \in R}{\sum} |T_{x, r}| = \underset{r\in [x, R]}{\underset{x\in R}{\sum}}|C_R(x)|.
\end{equation}
The second part follows  from \eqref{comfor1} and Lemma  \ref{lemma1}.
\end{proof}

\noindent As a corollary of Theorem \ref{com-thm}, we have
\begin{corollary}\label{formula1}
Let $R$ be a finite non-commutative ring. Then
\[
{\Pr}(R) = \frac {1}{{|R|}^2}\sum_{x\in R}|C_R(x)|  = \frac {1}{|R|}\sum_{x\in R}\frac{1}{|[x, R]|}.
\]
\end{corollary}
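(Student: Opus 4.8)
The plan is to obtain Corollary~\ref{formula1} as the special case $r = 0$ of Theorem~\ref{com-thm}. First I would note that $0 \in [x, R]$ for every $x \in R$, since $[x, x] = 0$ (equivalently, $[x, R]$ is an additive subgroup, hence contains the additive identity). Consequently the index set $\{x \in R : r \in [x, R]\}$ appearing in the double sum of Theorem~\ref{com-thm} collapses to all of $R$ when $r = 0$. Substituting $r = 0$ and recalling from the introduction that ${\Pr}_0(R) = {\Pr}(R)$, the two displayed expressions in Theorem~\ref{com-thm} become exactly the two expressions claimed in Corollary~\ref{formula1}.

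Since the hypothesis of Theorem~\ref{com-thm} is that $R$ is a finite non-commutative ring, and Corollary~\ref{formula1} carries the same hypothesis, there is nothing further to check: the derivation is a direct specialization. I would write this out in two or three sentences, making explicit the observation $0 \in [x,R]$ so that the reader sees why the restricted summation index becomes unrestricted.

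There is essentially no obstacle here; the only point requiring a word of justification is that $[x,R]$ always contains $0$, which is immediate either from $[x,x]=xx-xx=0$ or from $[x,R]$ being by definition an additive subgroup of $(R,+)$. If one wished, one could alternatively give a self-contained proof mirroring that of Theorem~\ref{com-thm}, observing that $T_{x,0} = C_R(x) \ne \phi$ for all $x$ and summing, but invoking the theorem is cleaner and is the natural way to present it as a corollary.
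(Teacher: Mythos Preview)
Your proposal is correct and matches the paper's approach exactly: the paper presents this result without proof, simply as an immediate corollary of Theorem~\ref{com-thm}, and your specialization $r=0$ together with the observation $0\in[x,R]$ is precisely the intended derivation.
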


We also have the following results.
\begin{proposition}
Let $R_1$, $R_2$ be two finite non-commutative rings and $(r_1, r_2) \in R_1 \times R_2$.  Then
\[
{\Pr}_{(r_1, r_2)}(R_1 \times R_2) = {\Pr}_{r_1}(R_1){\Pr}_{r_2}(R_2).
\]
\end{proposition}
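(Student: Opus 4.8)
The plan is to reduce everything to the computing formula of Theorem~\ref{com-thm} and then exploit the fact that centralizers and commutators in a direct product split coordinatewise. First I would record the two elementary facts about $R := R_1 \times R_2$: for $x = (x_1, x_2) \in R$ we have $C_R(x) = C_{R_1}(x_1) \times C_{R_2}(x_2)$, and the additive subgroup $[x, R]$ equals $[x_1, R_1] \times [x_2, R_2]$ (this last point uses the observation, already noted in the excerpt, that every element of $[x_i, R_i]$ is itself a single commutator, so a commutator $[(x_1,x_2),(y_1,y_2)] = ([x_1,y_1],[x_2,y_2])$ ranges over the full product set). Consequently $r = (r_1, r_2) \in [x, R]$ if and only if $r_1 \in [x_1, R_1]$ and $r_2 \in [x_2, R_2]$ simultaneously.

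Next I would apply Theorem~\ref{com-thm} to $R_1 \times R_2$, writing
\[
{\Pr}_{(r_1,r_2)}(R_1\times R_2) = \frac{1}{|R_1\times R_2|^2} \underset{(r_1,r_2)\in[x,R]}{\underset{x\in R_1\times R_2}{\sum}} |C_R(x)|.
\]
Using $|R_1 \times R_2|^2 = |R_1|^2 |R_2|^2$, the coordinatewise description of the index set, and $|C_R(x)| = |C_{R_1}(x_1)|\,|C_{R_2}(x_2)|$, the double sum factors as
\[
\left(\frac{1}{|R_1|^2}\underset{r_1\in[x_1,R_1]}{\underset{x_1\in R_1}{\sum}}|C_{R_1}(x_1)|\right)\left(\frac{1}{|R_2|^2}\underset{r_2\in[x_2,R_2]}{\underset{x_2\in R_2}{\sum}}|C_{R_2}(x_2)|\right),
\]
which is exactly ${\Pr}_{r_1}(R_1)\,{\Pr}_{r_2}(R_2)$ by a second application of Theorem~\ref{com-thm}. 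One small bookkeeping point: Theorem~\ref{com-thm} is stated for non-commutative rings, and the hypothesis of the proposition guarantees both $R_1$ and $R_2$ are non-commutative, hence so is $R_1 \times R_2$, so all three invocations of the formula are legitimate.

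The only genuine content, and the step I would be most careful about, is the identity $[x, R_1\times R_2] = [x_1, R_1]\times[x_2, R_2]$ as additive subgroups: the inclusion $\subseteq$ is immediate, but for $\supseteq$ one needs that an arbitrary element $(a_1, a_2)$ with $a_i \in [x_i, R_i]$ is a single commutator in $R_1\times R_2$, which follows by choosing $y_i \in R_i$ with $[x_i, y_i] = a_i$ and taking $y = (y_1, y_2)$. Everything else is a routine rearrangement of a finite double sum into a product of two finite double sums, so no further obstacle is expected. An alternative, essentially equivalent route would be to work directly from \eqref{comfor1} via the bijection $T_{(x_1,x_2),(r_1,r_2)} = T_{x_1,r_1}\times T_{x_2,r_2}$ together with Lemma~\ref{lemma2}; I would likely present whichever of the two is shorter.
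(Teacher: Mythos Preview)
Your argument is correct, but the paper takes a shorter and more elementary route: it works directly from the defining formula~\eqref{mainformula} rather than from Theorem~\ref{com-thm}. Setting $X_i = \{(x_i,y_i)\in R_i\times R_i : [x_i,y_i]=r_i\}$ and $Y = \{((x_1,x_2),(y_1,y_2)) : [(x_1,x_2),(y_1,y_2)]=(r_1,r_2)\}$, the map $((x_1,y_1),(x_2,y_2))\mapsto((x_1,x_2),(y_1,y_2))$ is a bijection $X_1\times X_2\to Y$, so $|Y|=|X_1|\,|X_2|$ and the result drops out of the definition immediately. Your approach via the computing formula is perfectly valid and shows that Theorem~\ref{com-thm} behaves well under products, but it requires the auxiliary verification that $[x,R_1\times R_2]=[x_1,R_1]\times[x_2,R_2]$ and the factorization of the double sum; the paper's bijection argument sidesteps all of that. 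Your alternative route via $T_{(x_1,x_2),(r_1,r_2)} = T_{x_1,r_1}\times T_{x_2,r_2}$ is in spirit closer to what the paper does, though the paper does not even invoke the $T$-notation or Lemma~\ref{lemma2}.
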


\begin{proof}
Let $X_i = \{(x_i, y_i) \in R_i\times R_i : [x_i, y_i] = r_i\}$ for $i = 1, 2$ and $Y = \{((x_1, x_2), (y_1, y_2)) \in (R_1\times R_2) \times (R_1\times R_2) : [(x_1, x_2),(y_1, y_2)]= (r_1, r_2)\}$. Then $((x_1, y_1), (x_2, y_2)) \mapsto ((x_1, x_2),(y_1, y_2))$ defines a bijective map from $X_1 \times X_2$ to $Y$. Therefore, $|Y| = |X_1||X_2|$ and hence the result follows.
\end{proof}

\begin{proposition}\label{symmetricity}
Let $R$ be a finite non-commutative ring and $r \in R$. Then
\[
{\Pr}_r(R) = {\Pr}_{-r}(R).
\]
\end{proposition}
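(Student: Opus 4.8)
The plan is to exhibit an explicit bijection between the two solution sets counted by the numerators of $\Pr_r(R)$ and $\Pr_{-r}(R)$. The crucial elementary fact is that the ring commutator is antisymmetric: for all $x, y \in R$ we have $[y, x] = yx - xy = -(xy - yx) = -[x, y]$. Hence $[x, y] = r$ forces $[y, x] = -r$.

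Concretely, I would set $A := \{(x, y) \in R \times R : [x, y] = r\}$ and $B := \{(x, y) \in R \times R : [x, y] = -r\}$, and consider the map $\sigma \colon R \times R \to R \times R$ defined by $\sigma(x, y) = (y, x)$. By the antisymmetry identity, $\sigma$ carries $A$ into $B$; since $\sigma$ is an involution of $R \times R$, its restriction $\sigma|_A \colon A \to B$ is a bijection. Therefore $|A| = |B|$, and dividing by $|R \times R| = |R|^2$ in \eqref{mainformula} gives $\Pr_r(R) = \Pr_{-r}(R)$.

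Alternatively, one can argue directly from the computing formula of Theorem \ref{com-thm}. Since $[x, R]$ is by construction an additive subgroup of $(R, +)$, it is closed under taking additive inverses, so $r \in [x, R]$ if and only if $-r \in [x, R]$. Consequently the index sets of the sums $\sum_{x \in R,\, r \in [x, R]} |C_R(x)|$ and $\sum_{x \in R,\, -r \in [x, R]} |C_R(x)|$ coincide, and the two sums are equal term by term, which yields the claim.

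There is essentially no obstacle here; the only point requiring a word of care is verifying that $\sigma$ genuinely maps $A$ \emph{into} $B$, rather than merely permuting $R \times R$, but this is immediate from the sign computation above. I would include just the first (bijective) argument, as it is the shortest and is self-contained.
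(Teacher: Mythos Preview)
Your proposal is correct and matches the paper's own proof: the paper likewise defines the two solution sets and uses the swap map $(x,y)\mapsto(y,x)$, relying on $[y,x]=-[x,y]$ to conclude $|X|=|Y|$. Your alternative via Theorem~\ref{com-thm} is a valid extra route not in the paper, but the primary argument you chose is the same one.
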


\begin{proof}
Let $X = \{(x, y) \in R \times R : [x,y] = r\}$ and $Y = \{(y,x) \in R \times R : [y,x] = -r\}$. Then $(x,y) \mapsto (y,x)$ defines a bijective mapping from $X$ to $Y$. Therefore, $|X| = |Y|$ and the result follows.
\end{proof}


Now we deduce a relation between ${\Pr}_r(R)$ and
 the number of edges in the $r$-noncommuting graph of $R$. The $r$-noncommuting graph of a finite ring $R$, denoted by $\Gamma_{R}^{r}$, was introduced and studied in \cite{dn2017}. Recall that $\Gamma_{R}^{r}$ is a graph whose vertex set is $R$ and two distinct vertices $x$ and $y$ are adjacent if $[x, y] \ne r$ and $[y, x] \ne r$. Let $\E(\Gamma_{R}^{r})$ denote the set of edges of the graph $\Gamma_{R}^{r}$. Then we have the following result.


\begin{proposition}
Let $R$ be a finite non-commutative ring and $r \in R$.
\begin{enumerate}
\item If $r = 0$ then ${\Pr}_r(R) = 1 - \frac{2|\E(\Gamma_{R}^{r})|}{|R|^2}$.
\item If $r \neq 0$ then ${\Pr}_r(R) = \begin{cases}
1 - \frac{1}{|R|} - \frac{2|\E(\Gamma_{R}^{r})|}{|R|^2} & \text{ if } 2r = 0,\\
\frac {1}{2}\left(1 - \frac{1}{|R|} - \frac{2|\E(\Gamma_{R}^{r})|}{|R|^2}\right) & \text{ if } 2r \ne 0.
\end{cases}$
\end{enumerate}
\end{proposition}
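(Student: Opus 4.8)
The plan is to count $|\E(\Gamma_{R}^{r})|$ by complementary counting. For distinct $x,y$, the pair $\{x,y\}$ fails to be an edge exactly when $[x,y] = r$ or $[y,x] = r$; since $[y,x] = -[x,y]$, this is equivalent to $[x,y] \in \{r,-r\}$. So I would first express the number of such ``missing'' pairs in terms of $\Pr_r(R)$ (and, when needed, $\Pr_{-r}(R)$), subtract from $\binom{|R|}{2}$, and solve for $\Pr_r(R)$, using $\binom{|R|}{2}/|R|^2 = \tfrac12\bigl(1-\tfrac1{|R|}\bigr)$ throughout.

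The argument splits according to whether $r = -r$, i.e.\ whether $2r = 0$. If $2r = 0$ (this covers part (a) and the first case of part (b)), the condition $[x,y]\in\{r,-r\}$ collapses to $[x,y] = r$. The set $\{(x,y)\in R\times R : [x,y]=r\}$ has size $\Pr_r(R)|R|^2$, is symmetric under $(x,y)\mapsto(y,x)$, and contains the $|R|$ diagonal pairs precisely when $r=0$ (since $[x,x]=0$). Hence the number of missing unordered pairs is $\tfrac12\bigl(\Pr_0(R)|R|^2 - |R|\bigr)$ when $r=0$, and $\tfrac12\Pr_r(R)|R|^2$ when $r\neq 0$ and $2r=0$. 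Plugging these into $|\E(\Gamma_{R}^{r})| = \binom{|R|}{2} - (\text{missing pairs})$ and simplifying yields part (a) and the first line of part (b) respectively.

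If $2r\neq 0$ then $r\neq -r$, so the sets $\{(x,y):[x,y]=r\}$ and $\{(x,y):[x,y]=-r\}$ are disjoint, with sizes $\Pr_r(R)|R|^2$ and $\Pr_{-r}(R)|R|^2$; by Proposition~\ref{symmetricity} the latter also equals $\Pr_r(R)|R|^2$. For each missing pair $\{x,y\}$, exactly one of the ordered pairs $(x,y),(y,x)$ lies in the first set and the other in the second, so the $2\Pr_r(R)|R|^2$ ordered pairs correspond to $\Pr_r(R)|R|^2$ missing unordered pairs. Thus $|\E(\Gamma_{R}^{r})| = \binom{|R|}{2} - \Pr_r(R)|R|^2$, and solving for $\Pr_r(R)$ gives the second line of part (b).

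The only delicate point is the ordered/unordered bookkeeping together with the diagonal: a factor $\tfrac12$ survives in part (a) and the first line of (b) because ``$[x,y]=\pm r$'' there reduces to a single condition symmetric in $x,y$, while in the $2r\neq 0$ case each missing pair is counted once by the $r$-condition and once by the $-r$-condition, cancelling that factor; and one must add back the $|R|$ diagonal pairs only when $r=0$. Beyond tracking these, the computation is routine algebraic simplification.
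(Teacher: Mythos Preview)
Your proposal is correct and follows essentially the same approach as the paper: both arguments count non-edges via complementary counting, split into the cases $2r=0$ versus $2r\neq 0$, and invoke Proposition~\ref{symmetricity} to equate $\Pr_r(R)$ with $\Pr_{-r}(R)$. The only cosmetic difference is that the paper works with the ordered-pair set $\mathcal{E}=\{(x,y):x\neq y,\ [x,y]\neq r,\ [y,x]\neq r\}$ and divides by $2$ at the end, whereas you phrase things in terms of unordered pairs and $\binom{|R|}{2}$ from the start.
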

\begin{proof}
Let $\mathcal{E} = \{(x, y) \in R \times R: x \ne y, [x,y] \neq r \text{ and } [y,x] \neq r\}$. Then $|\E(\Gamma_{R}^{r})| = \frac{|\mathcal{E}|}{2}$.
Consider the  sets $D = \{(x, y)\in R\times R: x = y\}$, $A = \{(x,y)\in R\times R:[x,y] = r\}$ and
$B = \{(x, y)\in R\times R:[y,x] = r\}$. Clearly, $\mathcal{E} \cap A \ne \phi$, $\mathcal{E} \cap B \ne \phi$  and  $R \times R = \mathcal{E} \cup D \cup A \cup B$. Also, $|D| = |R|, |A| = |R|^2{\Pr}_r(R)$ and $|B| = |R|^2{\Pr}_{-r}(R)$. Therefore, by Proposition \ref{symmetricity}, we have $|A| = |B|$. 

(a) If $r = 0$ then $D \subseteq A$ and $A = B$. Therefore, $\mathcal{E} = R \times R \setminus A$ and
\[
|\E(\Gamma_{R}^{r})| = \frac {{|R|}^2}{2}\left(1 - {\Pr}_r(R)\right).
\]
Hence, the result follows.

(b) Suppose that $r \ne 0$.
Then  $D \nsubseteq \mathcal{E}$, $A$ and $B$. Further, if $2r = 0$ then $A = B$. Therefore,  $\mathcal{E} = (R \times R \setminus D) \setminus A$ and
\[
|\E(\Gamma_{R}^{r})| = \frac {1}{2}\left({|R|}^2 - |R| - |R|^2{\Pr}_r(R)\right).
\]
If $2r \neq 0$ then $A \ne B$. Therefore, $\mathcal{E} = ((R \times R \setminus D) \setminus A) \setminus B$ and
\[
|\E(\Gamma_{R}^{r})| = \frac {1}{2}\left({|R|}^2 - |R| - 2|R|^2{\Pr}_r(R)\right).
\]
Hence, the result follows.
\end{proof}


 We conclude this section with the following bounds for ${\Pr}_r(R)$.
\begin{proposition}
Let $R$ be a finite non-commutative ring. If $r \ne 0$ then
\[
{\Pr}_r(R)\geq \frac{3}{|R : Z(R)|^2}.
\]

\end{proposition}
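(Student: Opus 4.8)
The plan is to start from the computing formula of Theorem~\ref{com-thm}, namely ${\Pr}_r(R) = \frac{1}{|R|^2}\sum_{x\in R,\ r\in[x,R]}|C_R(x)|$, and combine it with the crude estimate $|C_R(x)|\geq |Z(R)|$ (immediate from $Z(R)\subseteq C_R(x)$). With these two ingredients it is enough to show that the index set $S := \{x\in R : r\in[x,R]\}$ is large, namely $|S|\geq 3|Z(R)|$; indeed that gives ${\Pr}_r(R)\geq \frac{1}{|R|^2}\sum_{x\in S}|Z(R)| = \frac{|S|\,|Z(R)|}{|R|^2}\geq \frac{3|Z(R)|^2}{|R|^2} = \frac{3}{|R:Z(R)|^2}$. (Here one tacitly assumes ${\Pr}_r(R)\neq 0$, i.e.\ that $S\neq\phi$, which is the only case in which a positive lower bound can be expected.)

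First I would observe that $S$ is a union of cosets of $Z(R)$: if $x\in S$ and $z\in Z(R)$, then $[x+z,y]=[x,y]$ for every $y\in R$, so $[x+z,R]=[x,R]$ and hence $x+z\in S$. Next, fix $x_0,y_0\in R$ with $[x_0,y_0]=r$ (such a pair exists since we may assume ${\Pr}_r(R)\neq 0$). I would then exhibit three elements of $S$ lying in pairwise distinct cosets of $Z(R)$, namely $x_0$, $y_0$ and $x_0+y_0$. That $y_0\in S$ holds because $[y_0,x_0]=-r$ lies in the additive subgroup $[y_0,R]$, whence $r\in[y_0,R]$; that $x_0+y_0\in S$ holds because of the bilinearity identity $[x_0+y_0,y_0]=[x_0,y_0]+[y_0,y_0]=r$, so $r\in[x_0+y_0,R]$.

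To finish I would check the three cosets are distinct. Since $r\neq 0$ lies in $[x_0,R]$ and in $[y_0,R]$, neither $x_0$ nor $y_0$ is central. If $x_0\equiv y_0\pmod{Z(R)}$, writing $y_0=x_0+z$ with $z\in Z(R)$ gives $r=[x_0,y_0]=[x_0,x_0]+[x_0,z]=0$, a contradiction. If $x_0+y_0\equiv x_0\pmod{Z(R)}$ then $y_0\in Z(R)$, and if $x_0+y_0\equiv y_0\pmod{Z(R)}$ then $x_0\in Z(R)$; both are impossible. Hence $x_0+Z(R)$, $y_0+Z(R)$ and $(x_0+y_0)+Z(R)$ are three distinct cosets contained in $S$, so $|S|\geq 3|Z(R)|$, and the bound follows as above.

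The only genuinely delicate point is spotting the third coset: $x_0$ and $y_0$ land in $S$ for essentially formal reasons, but the extra element $x_0+y_0$ relies on the small computation $[x_0+y_0,y_0]=r$; the remaining work is routine bookkeeping with cosets of $Z(R)$. I would also remark in passing that retaining the sharper estimate $|C_R(x)|\geq 2|Z(R)|$ for the non-central elements $x\in S$ upgrades the conclusion to ${\Pr}_r(R)\geq \frac{6}{|R:Z(R)|^2}$, but the stated inequality already follows from the coarse count.
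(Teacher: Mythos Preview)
Your proof is correct and follows essentially the same idea as the paper's. The paper works directly with the set $\mathcal{C}=\{(x,y)\in R\times R:[x,y]=r\}$ and exhibits three pairwise disjoint cosets of $Z(R)\times Z(R)$ inside it---those of $(x_0,y_0)$, $(x_0+y_0,y_0)$ and $(x_0,x_0+y_0)$---whereas you project to the first coordinate, locate three cosets of $Z(R)$ in $S$, and recover the second factor of $|Z(R)|$ from the bound $|C_R(x)|\geq|Z(R)|$; the core bilinearity trick $[x_0+y_0,y_0]=r$ is the same in both arguments. Your explicit handling of the hypothesis ${\Pr}_r(R)\neq 0$ and the remark on the sharpening to $6/|R:Z(R)|^2$ are nice additions.
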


\begin{proof}
Since $r \ne 0$ we have the set ${\mathcal{C}} := \{(x, y) \in R \times R : [x, y] = r\}$ is non empty. Let $(s, k) \in {\mathcal{C}}$ then $(s, k) \notin Z(R)\times Z(R)$, otherwise $[s, k] = 0$. Since $(s, k) + \left(Z(R)\times Z(R)\right)$, $(s + k, k) + \left(Z(R)\times Z(R)\right)$ and $(s, k + s) + \left(Z(R)\times Z(R)\right)$ are three disjoint subsets of ${\mathcal{C}}$ we have $|{\mathcal{C}}| \geq 3|Z(R)|^2$. Hence, the result follows from \eqref{mainformula}.
\end{proof}

\begin{proposition}\label{ub02}
Let $R$ be a finite non-commutative ring. Then ${\Pr}_r(R) \leq {\Pr}(R)$ with equality if and only if $r = 0$. \end{proposition}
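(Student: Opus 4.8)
The plan is to use the computing formula from Theorem~\ref{com-thm} together with its specialization to $r = 0$ in Corollary~\ref{formula1}, and compare the two sums term by term. Write
\[
{\Pr}_r(R) = \frac{1}{|R|}\sum_{\substack{x \in R\\ r \in [x, R]}} \frac{1}{|[x, R]|}
\qquad\text{and}\qquad
{\Pr}(R) = \frac{1}{|R|}\sum_{x \in R} \frac{1}{|[x, R]|}.
\]
The first sum ranges over a (generally) smaller index set than the second, since $0 \in [x, R]$ for every $x \in R$ (as $[x, x] = 0$, or more simply because $[x, R]$ is an additive subgroup), whereas $r \in [x, R]$ may fail. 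Since every summand $\frac{1}{|[x, R]|}$ is strictly positive, dropping terms can only decrease the sum, so ${\Pr}_r(R) \le {\Pr}(R)$ immediately.

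For the equality case, one direction is the trivial observation that if $r = 0$ the two sums coincide, giving equality. For the converse, suppose ${\Pr}_r(R) = {\Pr}(R)$; then the two sums are equal, and since the $r$-sum omits exactly the terms with $x$ such that $r \notin [x, R]$, each of which contributes a positive amount, equality forces the index sets to coincide, i.e. $r \in [x, R]$ for every $x \in R$. In particular, taking $x \in Z(R)$ gives $[x, R] = \{0\}$, so $r \in \{0\}$, i.e. $r = 0$. (Note $Z(R) \neq \emptyset$ since $0 \in Z(R)$, so this step is legitimate.)

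I do not anticipate a serious obstacle here; the argument is essentially a monotonicity observation about a sum of positive terms indexed by a shrinking set. The one point requiring a moment's care is justifying that $0 \in [x, R]$ for all $x$ (immediate, since $[x,R]$ is an additive subgroup of $(R,+)$ and hence contains the identity $0$) and that some central element exists to pin down $r = 0$ in the equality case; both are routine. An alternative, if one prefers to avoid the second formula, is to argue directly from \eqref{comfor1}: $|R|^2 {\Pr}_r(R) = \sum_{x : r \in [x,R]} |C_R(x)| \le \sum_{x \in R} |C_R(x)| = |R|^2 {\Pr}(R)$, with the gap being $\sum_{x : r \notin [x, R]} |C_R(x)|$, which vanishes iff no such $x$ exists iff $r = 0$ by the centralizer argument above.
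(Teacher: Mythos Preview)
Your proof is correct and follows essentially the same approach as the paper: compare the sum from Theorem~\ref{com-thm} (over $x$ with $r \in [x,R]$) against the full sum from Corollary~\ref{formula1}, noting all terms are positive. The paper uses the $|C_R(x)|$ form you mention at the end and simply asserts the equality case as ``clear,'' whereas you spell out the argument via $x \in Z(R)$; your version is in fact more complete on that point.
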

\begin{proof}
By Theorem \ref{com-thm} and Corollary \ref{formula1}, we have
\[
{\Pr}_r(R) = \frac {1}{{|R|}^2}\underset{r \in [x, R]}{\underset{x\in R}{\sum}}|C_R(x)|
 \leq \frac {1}{|{R|}^2}\underset{x\in R}{\sum}|C_R(x)|
=  \Pr(R).
\]
Clearly, the equality holds if and only if $r = 0$.
\end{proof}

\begin{proposition}
Let $R$ be a finite non-commutative ring. If $p$ the smallest prime dividing $|R|$ and $r\neq 0$ then
\[
{\Pr}_r(R)\leq \frac {|R| - |Z(R)|}{p|R|} < \frac {1}{p}.
\]
\end{proposition}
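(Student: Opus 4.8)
The plan is to start from the computing formula in Theorem~\ref{com-thm}, namely
\[
{\Pr}_r(R) = \frac{1}{|R|^2}\underset{r \in [x,R]}{\underset{x \in R}{\sum}}|C_R(x)|,
\]
and to bound the sum on the right. First I would observe that if $r \ne 0$ then $r \notin [x,R]$ whenever $x \in Z(R)$, since $[x,R] = \{0\}$ for central $x$. Hence the summation is really over the set $S := \{x \in R \setminus Z(R) : r \in [x,R]\}$, which has at most $|R| - |Z(R)|$ elements. For each such $x$ we have $x \notin Z(R)$, so $C_R(x)$ is a proper subring — more precisely a proper additive subgroup — of $(R,+)$, whence $|C_R(x)| \le |R|/p$, where $p$ is the smallest prime dividing $|R|$ (the index $|R : C_R(x)|$ is an integer greater than $1$, so it is at least $p$).

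Combining these two observations gives
\[
{\Pr}_r(R) = \frac{1}{|R|^2}\sum_{x \in S}|C_R(x)| \le \frac{1}{|R|^2}\cdot |S| \cdot \frac{|R|}{p} \le \frac{|R| - |Z(R)|}{p|R|},
\]
which is the first claimed inequality. For the strict inequality $\frac{|R| - |Z(R)|}{p|R|} < \frac{1}{p}$, I would simply note that $Z(R)$ is nonempty (it contains $0$), so $|Z(R)| \ge 1 > 0$, giving $|R| - |Z(R)| < |R|$ and hence the bound is strict after dividing by $p|R|$.

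I do not expect any serious obstacle here; the argument is a direct refinement of the proof of Proposition~\ref{ub02}, the only new inputs being that central elements contribute nothing to the sum when $r \ne 0$, and the elementary group-theoretic fact that a proper subgroup of a finite group has index at least the smallest prime divisor of the group order. The one point that requires a moment's care is making sure that $C_R(x)$ is genuinely a \emph{proper} subset of $R$ for every $x \in S$: this holds because $x \in S$ implies $x \notin Z(R)$, so there is some $y \in R$ with $[x,y] \ne 0$, i.e. $y \notin C_R(x)$. After that, everything is a routine chain of inequalities.
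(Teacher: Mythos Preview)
Your proof is correct and follows essentially the same approach as the paper: both exclude central $x$ from the sum in Theorem~\ref{com-thm} because $r\ne 0$, and both bound each remaining term using the fact that $|R:C_R(x)|\ge p$ for $x\notin Z(R)$ (the paper states this equivalently as $|[x,R]|\ge p$ via Lemma~\ref{lemma1}). The only cosmetic difference is that you work with $|C_R(x)|\le |R|/p$ directly rather than passing through $|[x,R]|$.
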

\begin{proof}
We have $R \ne Z(R)$. If $x \in Z(R)$ then $r \notin [x,R]$. If $x \in R \setminus Z(R)$ then $C_R(x) \ne R$. Therefore, by Lemma \ref{lemma1}, we have  $|[x, R]| = |R : C_r(x)| > 1$. Since   $p$ is the smallest prime dividing $|R|$ we have $|[x, R]| \geq p$.  Hence the result follows from Theorem \ref{com-thm}.
\end{proof}

\section{An invariance property }
In 1940, Hall \cite{pH40} introduced the concept of isoclinism between two groups. In 1995,
Lescot \cite{pL95}  showed that $\Pr(G_1) = \Pr(G_2)$ if $G_1$ and $G_2$ are  isoclinic finite groups. Various generalizations of this result can be found in \cite{DasNath2012, nath2011, nY2015, PS08}.
Recently, Buckley et. al. \cite{BMS} introduced the concept of $\mathbb{Z}$-isoclinism between two rings and showed that $\Pr(R_1) = \Pr(R_2)$ if $R_1$ and $R_2$ are two $\mathbb Z$-isoclinic finite rings (see \cite[Lemma 3.2]{BMS}). In this section, we generalize Lemma 3.2 of \cite{BMS}. Recall that two rings  $R_1$ and $R_2$ are said to be $\mathbb Z$-isoclinic if there exist additive group isomorphisms $\alpha :\frac {R_1}{Z(R_1)}\rightarrow \frac {R_2}{Z(R_2)}$   and $\beta :[R_1, R_1]\rightarrow [R_2, R_2]$ such that $\beta ([x_1, y_1])=[x_2, y_2]$ whenever  $\alpha (x_1 +  Z(R_1)) = x_2 +  Z(R_2)$ and $\alpha (y_1 +  Z(R_1)) = y_2 +  Z(R_2)$. The following theorem gives an invariance property of ${\Pr}_r(R)$ under $\mathbb Z$-isoclinism of rings.

\begin{theorem}
Let $R_1$ and $R_2$ be two finite non-commutative rings. If  $(\alpha,\beta)$ is a    $\mathbb Z$-isoclinism between $R_1$ and $R_2$ then
\[
{\Pr}_r(R_1) = {\Pr}_{\beta (r)}(R_2).
\]
\end{theorem}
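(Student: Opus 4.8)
The plan is to use the computing formula from Theorem~\ref{com-thm}, which expresses ${\Pr}_r(R)$ as a sum over $x \in R$ of $|C_R(x)|$ (restricted to those $x$ with $r \in [x,R]$), normalized by $|R|^2$. The $\mathbb{Z}$-isoclinism data $(\alpha,\beta)$ does not act on $R_1$ itself, only on the quotient $R_1/Z(R_1)$; so the first reduction is to rewrite the sum in Theorem~\ref{com-thm} as a sum over cosets of $Z(R_1)$. Since $C_{R_1}(x)$ depends only on the coset $x + Z(R_1)$ (indeed $Z(R_1) \subseteq C_{R_1}(x)$ and $C_{R_1}(x+z) = C_{R_1}(x)$ for $z \in Z(R_1)$), and likewise the condition $r \in [x,R_1]$ depends only on $x + Z(R_1)$, each coset contributes $|Z(R_1)| \cdot |C_{R_1}(x)|$ to the sum. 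Writing $\bar{x} = x + Z(R_1)$ one gets
\[
{\Pr}_r(R_1) = \frac{|Z(R_1)|}{|R_1|^2} \underset{r \in [x,R_1]}{\underset{\bar{x} \in R_1/Z(R_1)}{\sum}} |C_{R_1}(x)|.
\]
The next step is to control $|C_{R_1}(x)|$ in coset-theoretic terms: by Lemma~\ref{lemma1}, $|C_{R_1}(x)| = |R_1| / |[x,R_1]|$, and one checks that $[x,R_1]$ is exactly the image in $[R_1,R_1]$ of the map $\bar{y} \mapsto [x,y]$, so that $|[x,R_1]| = |R_1/Z(R_1)| \,/\, |\{\bar y : [x,y]=0\}|$ — but more cleanly, $|C_{R_1}(x)| = |Z(R_1)| \cdot |C_{R_1}(x)/Z(R_1)|$ and $C_{R_1}(x)/Z(R_1)$ is the kernel of the additive homomorphism $R_1/Z(R_1) \to [R_1,R_1]$ given by $\bar y \mapsto [x,y]$.

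Now I bring in the isoclinism. Since $\beta$ is an additive isomorphism $[R_1,R_1] \to [R_2,R_2]$ with $\beta([x_1,y_1]) = [x_2,y_2]$ whenever $\alpha(\bar{x}_1) = \bar{x}_2$, $\alpha(\bar{y}_1) = \bar{y}_2$, the commutator map for $R_1$ at $x$ and the commutator map for $R_2$ at any $x'$ with $\alpha(\bar{x}) = \bar{x}'$ are intertwined by $\alpha$ and $\beta$. Consequently $\alpha$ restricts to an isomorphism $C_{R_1}(x)/Z(R_1) \to C_{R_2}(x')/Z(R_2)$, giving $|C_{R_1}(x)/Z(R_1)| = |C_{R_2}(x')/Z(R_2)|$; and $\beta$ carries $[x,R_1]$ bijectively onto $[x',R_2]$, so $r \in [x,R_1]$ if and only if $\beta(r) \in [x',R_2]$. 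Since $\alpha$ is a bijection on the quotients, summing over $\bar{x} \in R_1/Z(R_1)$ is the same as summing over $\bar{x}' \in R_2/Z(R_2)$, term by term. One also needs $|R_1/Z(R_1)| = |R_2/Z(R_2)|$ (immediate from $\alpha$), hence $|R_1|/|Z(R_1)| = |R_2|/|Z(R_2)|$; combined with $|[R_1,R_1]| = |[R_2,R_2]|$ from $\beta$, elementary bookkeeping shows the normalizing constants match. Assembling these identities in the coset-sum formula above yields ${\Pr}_r(R_1) = {\Pr}_{\beta(r)}(R_2)$.

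The main obstacle is the bookkeeping around the normalization: $|R_1|$ and $|R_2|$ need not be equal (only the indices $|R_i : Z(R_i)|$ and the orders $|[R_i,R_i]|$ are forced to agree), so one cannot simply equate the two raw sums from Theorem~\ref{com-thm}. The fix is to work throughout with the \emph{second} expression in Theorem~\ref{com-thm}, ${\Pr}_r(R) = \frac{1}{|R|}\sum_{r \in [x,R]} |[x,R]|^{-1}$, rewritten as a sum over cosets: each term $|[x,R]|^{-1}$ is intrinsic to the isoclinism data (it equals $|C_R(x)/Z(R)| \cdot |R:Z(R)|^{-1}$, and both factors are preserved), and the coset sum has $|R:Z(R)|$ terms, so the prefactor becomes $\frac{1}{|R:Z(R)|}$ after absorbing the $|Z(R)|$. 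That version involves only isoclinism-invariant quantities, so the two sides visibly coincide once the term-by-term correspondence via $\alpha$ is established. Verifying carefully that $\alpha$ really does send $C_{R_1}(x)/Z(R_1)$ onto $C_{R_2}(x')/Z(R_2)$ — i.e. that $[x,y] \in Z$-level $0$ is detected correctly, using that $\beta$ is injective so $\beta([x,y]) = 0 \iff [x,y]=0$ — is the one spot requiring genuine (though short) care.
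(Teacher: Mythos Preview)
Your proposal is correct and follows essentially the same route as the paper: rewrite the formula of Theorem~\ref{com-thm} (in its $|[x,R]|^{-1}$ form) as a sum over cosets of the center, and then use the isoclinism data $(\alpha,\beta)$ to match the summands term by term via the three invariants $|R:Z(R)|$, $|[x,R]|$, and the condition $r\in[x,R]$. You supply more justification than the paper does for why $|[s_1,R_1]|=|[s_2,R_2]|$ and why $r\in[s_1,R_1]\iff\beta(r)\in[s_2,R_2]$ (the paper simply asserts these), but the architecture is identical.
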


\begin{proof}
By Theorem \ref{com-thm}, we have
\begin{align*}
{\Pr}_r(R_1)
=&\frac {|Z(R_1)|}{|R_1|}\underset{r \in [s_1, R_1]}{\underset{s_1 + Z(R_1)\in\frac {R_1}{Z(R_1)}}{\sum}}\frac {1}{|[s_1, R_1]|}
\end{align*}
noting that $r \in [s_1, R_1]$ if and only if $r \in [s_1 + z, R_1]$ for all $z \in Z(R_1)$.
Similarly, it can be seen that
\[
{\Pr}_{\beta (r)}(R_2) = \frac {|Z(R_2)|}{|R_2|}\underset{\beta (r)\in [s_2, R_2]}{\underset{s_2 + Z(R_2)\in\frac {R_2}{Z(R_2)}}\sum}
\frac {1}{|[s_2, R_2]|}.
\]
Since $(\alpha,\beta)$ is a    $\mathbb Z$-isoclinism between $R_1$ and $R_2$ we have  $\frac {|R_1|}{|Z(R_1)|} = \frac {|R_2|}{|Z(R_2)|}$, $|[s_1,R_1]| = |[s_2, R_2]|$ and $r \in [s_1, R_1]$ if and only if  $\beta (r) \in [s_2, R_2]$. Hence
\[
{\Pr}_r(R_1) = {\Pr}_{\beta (r)}(R_2).
\]
\end{proof}




\section*{Acknowledgment}
The first author would like to thank  DST for the INSPIRE Fellowship.

\end{document}